\newtheorem{theorem}{Theorem}
\newtheorem{lemma}[theorem]{Lemma}
\theoremstyle{definition}
\newtheorem{definition}[theorem]{Definition}
\newtheorem{example}{Example}
\theoremstyle{remark}
\title[Chlodowsky variant of Bernstein-type operators on the domain]{Chlodowsky variant of Bernstein-type operators on the domain}
\author[S. K{\i}rc{\i} Serenbay]{Sevilay K{\i}rc{\i} Serenbay}
\address[S. K{\i}rc{\i} Serenbay]{ Department of Mathematics,
              Faculty of Science,
              Harran University, \c{S}anl{\i}urfa; Department of Mathematics,
              Faculty of Science,
              Ankara University,
              06100 Tando\u{g}an, Ankara, Turkey}
\email[S. K{\i}rc{\i} Serenbay]{skserenbay@harran.edu.tr,skserenbay@ankara.edu.tr}
\author[R. Akta\c{s} Karaman]{Rabia Akta\c{s} Karaman}
\address[R. Akta\c{s} Karaman]{Department of Mathematics,
              Faculty of Science,
              Ankara University,
              06100 Tando\u{g}an, Ankara, Turkey}
\email[R. Akta\c{s} Karaman (corresponding author)]{raktas@science.ankara.edu.tr}
\date{\today}
\subjclass[2020]{Primary 41A25, 26A16, 41A10, 41A36}
\keywords{Bernstein operators; Bernstein-Chlodowsky operators; modulus of continuity}
\date{\today}
\begin{document}

\begin{abstract}
In the present paper, we deal with Bernstein-Chlodowsky type operators for
approximating functions on the domain. We first present Bernstein-Chlodowsky
type operators in two variables and then we discuss some examples of these
operators under a domain transformation.\textbf{\ }Finally, we give bivarite shifted $m$th
Bernstein-Chlodowsky-Stancu operators and we present some figures for
approximation properties of our operator.

\end{abstract}

\maketitle

\section{Introduction}
The classical Bernstein operators were initially introduced by S. Bernstein \cite{bernstein} in 1912 as a way to approximate continuous functions on the interval $[0,1]$ using polynomials as
\[
B_{m}\left(  g,x\right)  =%
{\displaystyle \sum \limits_{k=0}^{m}}
g\left(  \frac{k}{m}\right)  p_{m,k}\left(  x\right)  ,
\]
where $p_{m,k}\left(  x\right)  =\binom{m}{k}x^{k}\left(  1-x\right)  ^{m-k}$
for $0\leq x\leq1,~m\geq0$,
and they are well-known for their convergence properties. Bernstein operators are extensively used in approximating continuous functions. They offer a constructive proof of the Weierstrass approximation theorem \cite{korovkin}, showing that any continuous function on a closed interval can be uniformly approximated by a sequence of
Bernstein polynomials. Over the years, numerous modifications and extensions
of Bernstein operators have been studied by many authors, refer to
\cite{butzer,Kantorovitch,CGM,CGR,Derriennic,Derriennic2}. Bernstein polynomials have several
applications in approximation theory \cite{korovkin,Lorentz}. Besides this, Bernstein
polynomials form the basis of B\'{e}zier curves \cite{Bezier} and surfaces,
which are fundamental tools in computer graphics, animation, and computer
aided geometric design \cite{Sederberg}. These curves and surfaces are widely
used for modeling smooth shapes.\textbf{\ }Furthermore,\textbf{\ }the
multivariate extensions of Bernstein operators have been widely studied, with
the most common form being defined on the unit simplex in higher dimensions
\cite{BIE,BSX,CD,Schurer,Stancu}. In the recent paper \cite{RMP}, the authors
present an extension of the Bernstein operator to approximate functions
defined on the unit disk. The shifted $m$th Bernstein--Stancu operator and the
shifted $m$th Bernstein-type operator are defined and their approximation
properties are investigated. Several examples are discussed, comparing the
approximation results of both Bernstein-type operators on the unit disk.

One of the generalization of Bernstein operators is Bernstein-Chlodowsky operators.
These polynomials, introduced by Chlodowsky \cite{Chlodowsky} in 1937, extend the concept of
Bernstein polynomials (from 1912) to an unbounded set. $m$th
Bernstein-Chlodowsky operator is defined as%
\begin{equation}
B_{m}^{Cl}\left(  g,x\right)  =%
{\displaystyle \sum \limits_{k=0}^{m}}
g\left(  \frac{k}{m}b_{m}\right)  q_{m,k}\left(  \frac{x}{b_{m}}\right)  ,\label{2}%
\end{equation}
where%
\begin{equation}
q_{m,k}\left(  \frac{x}{b_{m}}\right)  =\binom{m}{k}\left(  \frac{x}{b_{m}}\right)
^{k}\left(  1-\frac{x}{b_{m}}\right)  ^{m-k}\label{2*}%
\end{equation}
for $0\leq x\leq b_{m},$ $b_{m}\geq0,~\underset{m\rightarrow \infty}{\lim}%
b_{m}=\infty,~\underset{m\rightarrow \infty}{\lim}\frac{b_{m}}{m}=0.$ The
classical Bernstein-Chlodowsky operators are linear and positive operators
acting on the function $g$ and they satisfy%
\begin{equation}
B_{m}^{Cl}\left(  1,x\right)  =1,~B_{m}^{Cl}\left(  t,x\right)  =x\text{ and
}B_{m}^{Cl}\left(  t^{2},x\right)  =x^{2}+\frac{x\left(  b_{m}-x\right)  }%
{m}.\label{1}%
\end{equation}
From the Korovkin's theorem, the operator $B_{m}^{Cl}\left(  g,x\right)  $
converges to $g$ uniformly on the interval $\left[  0,b_{m}\right]  .$

The Bernstein-Chlodowsky operators and their extensions have been studied by
many authors \cite{AOA,GEI,GI,ibikli}. The multivariable generalization of these operators has also been introduced \cite{AA,Buyukyazici, GG,ibikli2,ilarslan}. In \cite{AA}, two dimensional Bernstein-Stancu-Chlodowsky operators on triangle with mobile boundaries are investigated. In \cite{ibikli2}, Bernstein- Chlodowsky polynomials on a triangular domain are studied and the problem of weighted approximations of continuous functions of two variables by a sequences of linear positive operators is discussed.

This paper is motivated by a recent study \cite{RMP} of Bernstein-type
operators on the unit disk. In the present paper, we focus on extending the
Bernstein-Chlodowsky operator for approximating functions  on domains in
$\mathbb{R}^{2}$, such as square region, triangular or elliptic region.  We
explore two modifications: transforming the argument of the target function
and defining a suitable function basis as described in (\ref{2*}). We present
two Bernstein-Chlodowsky type approximants, comparing them through some examples.

The organization of the paper is as follows: in the second section, we present
basic properties of univariate Bernstein-Chlodowsky type operators. In the
third section, we study Bernstein-Chlodowsky type operators in two variables
and, in the next section we discuss some examples of these
operators.\textbf{\ }Finally, we give bivarite shifted $m$th
Bernstein-Chlodowsky-Stancu operators and discuss their approximation
properties.\textbf{\ }

\section{Univariate Bernstein-Chlodowsky type operators}

This section deals with some basic properties of the shifted $m$th Bernstein-Chlodowsky operators which we will use in the next sections. We first start with the shifted Bernstein-Chlodowsky basis on the interval $\left[  \alpha b_{m},\beta b_{m}\right]  $.

From the change of variable%
\[
x=\left(  \beta-\alpha \right)  b_{m}t+\alpha b_{m},~b_{m}\geq0,~\alpha
<\beta,~0\leq t\leq1,
\]
we can define the univariate Bernstein-Chlodowsky basis on the interval
$\left[  \alpha b_{m},\beta b_{m}\right]  $ as follows%
\begin{align}
\widetilde{q}_{m,k}\left(  x;\left[  \alpha b_{m},\beta b_{m}\right]  \right)
& =p_{m,k}\left(  \frac{\frac{x}{b_{m}}-\alpha}{\beta-\alpha}\right)
\nonumber \\
& =\frac{1}{\left(  \beta-\alpha \right)  ^{m}b_{m}^{m}}\binom{m}{k}\left(
x-\alpha b_{m}\right)  ^{k}\left(  \beta b_{m}-x\right)  ^{m-k}\label{6}%
\end{align}
for $\alpha b_{m}\leq x\leq \beta b_{m}.$ For the set of the polynomials
$\widetilde{q}_{m,k}\left(  x;\left[  \alpha b_{m},\beta b_{m}\right]
\right)  $,  the following properties hold:

(i)%
\begin{align}%
{\displaystyle \sum \limits_{k=0}^{m}}
\widetilde{q}_{m,k}\left(  x;\left[  \alpha b_{m},\beta b_{m}\right]  \right)
& =\frac{1}{\left(  \beta-\alpha \right)  ^{m}b_{m}^{m}}%
{\displaystyle \sum \limits_{k=0}^{m}}
\binom{m}{k}\left(  x-\alpha b_{m}\right)  ^{k}\left(  \beta b_{m}-x\right)
^{m-k}\nonumber \\
& =\frac{1}{\left(  \beta-\alpha \right)  ^{m}b_{m}^{m}}\left(  \beta
b_{m}-\alpha b_{m}\right)  ^{m}=1,\label{5}%
\end{align}

(ii) $\widetilde{q}_{m,k}\left(  x;\left[  \alpha b_{m},\beta b_{m}\right]
\right)  \geq0,~\alpha b_{m}\leq x\leq \beta b_{m},$

(iii) $\widetilde{q}_{m,k}\left(  \alpha b_{m}\right)  =p_{m,k}\left(
0\right)  =\delta_{0,k}$ and $\widetilde{q}_{m,k}\left(  \beta b_{m}\right)
=p_{m,k}\left(  1\right)  =\delta_{k,m}$ where $\delta_{k,m}$ denotes the
kronecker delta,

(iv) For $m\neq0,$ the function $\widetilde{q}_{m,k}\left(  x;\left[  \alpha
b_{m},\beta b_{m}\right]  \right)  $ takes maximum value at $x=\left(
\beta-\alpha \right)  \dfrac{kb_{m}}{m}+\alpha b_{m}$ and maximum value equals
to
\[
\widetilde{q}_{m,k}\left(  \left(  \beta-\alpha \right)  \dfrac{kb_{m}}%
{m}+\alpha b_{m};\left[  \alpha b_{m},\beta b_{m}\right]  \right)
=p_{m,k}\left(  \frac{k}{m}\right)  =\binom{m}{k}\frac{k^{k}}{m^{m}}\left(
m-k\right)  ^{m-k}.
\]

(v) $\left(  \beta-\alpha \right)  b_{m}\widetilde{q}_{m,k}^{\prime}\left(
x;\left[  \alpha b_{m},\beta b_{m}\right]  \right)  =m\left(  \widetilde
{q}_{m-1,k-1}\left(  x;\left[  \alpha b_{m},\beta b_{m}\right]  \right)
-\widetilde{q}_{m-1,k}\left(  x;\left[  \alpha b_{m},\beta b_{m}\right]
\right)  \right)  .$
\\
\\
We now consider the shifted univariate $m$th Bernstein-Chlodowsky operator as%
\[
\widetilde{B}_{m}^{Cl}\left(  g\left(  x\right)  ,J\right)  =%
{\displaystyle \sum \limits_{k=0}^{m}}
g\left(  \left(  \beta-\alpha \right)  \dfrac{kb_{m}}{m}+\alpha b_{m}\right)
\widetilde{q}_{m,k}\left(  x,J\right)
\]
for every function $g$ defined on $J=\left[  \alpha b_{m},\beta b_{m}\right]
$. Here $\widetilde{B}_{m}^{Cl}\left(  g\left(  x\right)  ,J\right)  $ is a
polynomial of degree at most $m$ and it follows
\[
\widetilde{B}_{m}^{Cl}\left(  g\left(  x\right)  ,J\right)  =B_{m}%
^{Cl}G\left(  s\right)  ,~~0\leq s\leq b_{m}%
\]
where $G\left(  s\right)  =g\left(  \left(  \beta-\alpha \right)b_{m}  s+\alpha
b_{m}\right)  .$

\begin{lemma}\label{lemma1}
For $x\in \left[ \alpha b_{m},\beta b_{m}\right] $ and $m\in \mathbb{N}$,
the shifted univariate $m$th Bernstein-Chlodowsky operator $\widetilde{B}%
_{m}^{Cl}\left( g\left( x\right) ,J\right) $ has the following results
\end{lemma}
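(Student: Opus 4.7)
The plan is to compute the moments of $\widetilde{B}_{m}^{Cl}$ on the test functions $1,t,t^{2}$ by reducing everything to either the classical Bernstein-Chlodowsky moments in (\ref{1}) or the standard Bernstein moments on $[0,1]$, and then expressing the result back in the variable $x\in[\alpha b_{m},\beta b_{m}]$. The key is the substitution already built into the definition of $\widetilde{q}_{m,k}$: setting
\[
u=\frac{\tfrac{x}{b_{m}}-\alpha}{\beta-\alpha}\in[0,1],
\]
so that $\widetilde{q}_{m,k}(x,J)=p_{m,k}(u)$, and writing the abscissae as $y_{k}=(\beta-\alpha)\tfrac{kb_{m}}{m}+\alpha b_{m}$. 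Then $\widetilde{B}_{m}^{Cl}(g,J)=\sum_{k=0}^{m}g(y_{k})p_{m,k}(u)$, which is just $B_{m}(G,u)$ with $G(v)=g((\beta-\alpha)b_{m}v+\alpha b_{m})$; alternatively, use the identity $\widetilde{B}_{m}^{Cl}(g,J)=B_{m}^{Cl}G$ recorded just before the lemma.

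First, for $g(t)=1$, the result $\widetilde{B}_{m}^{Cl}(1,J)=1$ follows immediately from property (i) in (\ref{5}). Next, for $g(t)=t$, factor $y_{k}=\alpha b_{m}+(\beta-\alpha)b_{m}(k/m)$, so that
\[
\widetilde{B}_{m}^{Cl}(t,J)=\alpha b_{m}\sum_{k=0}^{m}p_{m,k}(u)+(\beta-\alpha)b_{m}\sum_{k=0}^{m}\frac{k}{m}p_{m,k}(u)=\alpha b_{m}+(\beta-\alpha)b_{m}\,u,
\]
which after substituting back the expression for $u$ collapses to $x$. Here I am using the classical Bernstein moment $\sum_{k}(k/m)p_{m,k}(u)=u$, equivalently $B_{m}^{Cl}(t,x)=x$ from (\ref{1}).

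For $g(t)=t^{2}$, expand $y_{k}^{2}=\alpha^{2}b_{m}^{2}+2\alpha(\beta-\alpha)b_{m}^{2}(k/m)+(\beta-\alpha)^{2}b_{m}^{2}(k/m)^{2}$ and apply the second Bernstein moment $\sum_{k}(k/m)^{2}p_{m,k}(u)=u^{2}+\tfrac{u(1-u)}{m}$ together with the linear moment already used. After collecting terms, the constant and linear pieces reassemble into $((\beta-\alpha)b_{m}u+\alpha b_{m})^{2}=x^{2}$, while the remainder simplifies to $\tfrac{(\beta-\alpha)^{2}b_{m}^{2}}{m}u(1-u)$. Using $(\beta-\alpha)b_{m}u=x-\alpha b_{m}$ and $(\beta-\alpha)b_{m}(1-u)=\beta b_{m}-x$, this last quantity equals $\tfrac{(x-\alpha b_{m})(\beta b_{m}-x)}{m}$, giving the expected shifted analogue of (\ref{1}).

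The calculations are entirely routine; the only real care point is the $t^{2}$ case, where one must carry out the expansion cleanly and verify that the $\alpha b_{m}$-dependent cross terms really do combine with the linear moment to produce $x^{2}$ exactly, leaving a residue that factors as $(x-\alpha b_{m})(\beta b_{m}-x)/m$. An equivalent shortcut, should one prefer to avoid the direct expansion, is to invoke $B_{m}^{Cl}(t^{2},s)=s^{2}+s(b_{m}-s)/m$ on $G(s)=((\beta-\alpha)s+\alpha b_{m})^{2}$ and then rewrite in terms of $x$; either route yields the same moment identity.
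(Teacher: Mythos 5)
Your proposal is correct and follows essentially the same route as the paper: both reduce the computation to the standard Bernstein moment identities for $1$, $k/m$, and $(k/m)^{2}$ and then rewrite in the variable $x$. The only cosmetic difference is that the paper carries out the index-shift identities directly on the shifted basis $\widetilde{q}_{m,k}(x;J)$, whereas you change variables to $u\in[0,1]$ and quote the classical moments; the algebra, including the key factorization $(\beta-\alpha)^{2}b_{m}^{2}\,u(1-u)/m=(x-\alpha b_{m})(\beta b_{m}-x)/m$, is identical.
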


$i)~\widetilde{B}_{m}^{Cl}\left(  1,J\right)  =1,$

$ii)$ $\widetilde{B}_{m}^{Cl}\left(  x,J\right)  =x,$

$iii)~\widetilde{B}_{m}^{Cl}\left(  x^{2},J\right)  =x^{2}+\frac{1}{m}\left(
x-\alpha b_{m}\right)  \left(  \beta b_{m}-x\right)  .$

\begin{proof}
i) It is clear from (\ref{5})%
\[
\widetilde{B}_{m}^{Cl}\left(  1,J\right)  =%
{\displaystyle \sum \limits_{k=0}^{m}}
\widetilde{q}_{m,k}\left(  x,J\right)  =1.
\]

ii) It follows from (\ref{5})
\begin{align*}
\widetilde{B}_{m}^{Cl}\left(  x,J\right)   & =%
{\displaystyle \sum \limits_{k=0}^{m}}
\left(  \left(  \beta-\alpha \right)  \dfrac{kb_{m}}{m}+\alpha b_{m}\right)
\widetilde{q}_{m,k}\left(  x,J\right) \\
& =\left(  \beta-\alpha \right)  b_{m}%
{\displaystyle \sum \limits_{k=0}^{m}}
\frac{k}{m}\widetilde{q}_{m,k}\left(  x,J\right)  +\alpha b_{m}\\
& =\left(  x-\alpha b_{m}\right)
{\displaystyle \sum \limits_{k=0}^{m-1}}
\widetilde{q}_{m-1,k}\left(  x,J\right)  +\alpha b_{m}\\
& =x-\alpha b_{m}+\alpha b_{m}=x
\end{align*}

iii) Finally, we have
\begin{align*}
\widetilde{B}_{m}^{Cl}\left(  x^{2},J\right)   & =%
{\displaystyle \sum \limits_{k=0}^{m}}
\left(  \left(  \beta-\alpha \right)  \dfrac{kb_{m}}{m}+\alpha b_{m}\right)
^{2}\widetilde{q}_{m,k}\left(  x,J\right) \\
& =\left(  \beta-\alpha \right)  ^{2}b_{m}^{2}%
{\displaystyle \sum \limits_{k=0}^{m}}
\frac{k^{2}}{m^{2}}\widetilde{q}_{m,k}\left(  x,J\right)  +2\alpha
b_{m}\left(  x-\alpha b_{m}\right)
{\displaystyle \sum \limits_{k=0}^{m-1}}
\widetilde{q}_{m-1,k}\left(  x,J\right)  +\alpha^{2}b_{m}^{2}\\
& =\left(  \frac{m-1}{m}\right)  \left(  x-\alpha b_{m}\right)  ^{2}%
{\displaystyle \sum \limits_{k=0}^{m-2}}
\widetilde{q}_{m-2,k}\left(  x;J\right)  +\frac{\left(  \beta-\alpha \right)
b_{m}\left(  x-\alpha b_{m}\right)  }{m}%
{\displaystyle \sum \limits_{k=0}^{m-1}}
\widetilde{q}_{m-1,k}\left(  x;J\right) \\
& +2\alpha b_{m}\left(  x-\alpha b_{m}\right)  +\alpha^{2}b_{m}^{2}\\
& =\left(  \frac{m-1}{m}\right)  \left(  x-\alpha b_{m}\right)  ^{2}%
+\frac{\left(  \beta-\alpha \right)  b_{m}\left(  x-\alpha b_{m}\right)  }%
{m}+2\alpha b_{m}\left(  x-\alpha b_{m}\right)  +\alpha^{2}b_{m}^{2}\\
& =x^{2}+\frac{1}{m}\left(  x-\alpha b_{m}\right)  \left(  \beta
b_{m}-x\right)  ,
\end{align*}
which completes the proof.
\end{proof}

By taking into account the results in Lemma \ref{lemma1} , it follows from Korovkin's theorem \cite{AC}:

\begin{theorem}\label{theorem1}
Let $g$ be a continuous function on $\left[ \alpha b_{m},\beta b_{m}\right] $%
. Then%
\begin{equation*}
\underset{m\rightarrow \infty }{\lim }\widetilde{B}_{m}^{Cl}\left(
g,J\right) =g\left( x\right)
\end{equation*}%
holds and the operators $\widetilde{B}_{m}^{Cl}$ converge uniformly on $%
\left[ \alpha b_{m},\beta b_{m}\right] .$
\end{theorem}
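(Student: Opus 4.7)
The plan is to invoke Korovkin's theorem for the sequence of positive linear operators $\{\widetilde{B}_m^{Cl}\}$. Two ingredients must be verified: first, positivity and linearity of each operator; and second, convergence on the three Korovkin test functions $e_0(x)=1$, $e_1(x)=x$, $e_2(x)=x^2$. Linearity is immediate from the defining finite sum, while positivity is exactly property (ii) of the shifted basis recorded at the start of this section: $\widetilde{q}_{m,k}(x;[\alpha b_m, \beta b_m]) \ge 0$ on $J$.

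Next I would observe that Lemma \ref{lemma1} essentially already does the work for the test functions. Parts (i) and (ii) give $\widetilde{B}_m^{Cl}(1,J)=1$ and $\widetilde{B}_m^{Cl}(x,J)=x$ \emph{exactly}, so these two contribute no error whatsoever to the Korovkin criterion. Only the quadratic test function requires an estimate, and from Lemma \ref{lemma1}(iii) the residual is known in closed form,
\[
\widetilde{B}_m^{Cl}(x^2,J)-x^2 = \tfrac{1}{m}(x-\alpha b_m)(\beta b_m - x).
\]

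The main (and only) obstacle is to argue that this residual tends to $0$ uniformly on the relevant $x$-region. Since both factors are nonnegative on $J$ and their sum equals $(\beta-\alpha)b_m$, the AM--GM inequality gives the uniform bound
\[
0 \;\le\; \widetilde{B}_m^{Cl}(x^2,J)-x^2 \;\le\; \frac{(\beta-\alpha)^2 b_m^2}{4m},
\]
and on a fixed compact subset $[c,d]\subset J$ one can sharpen this to $C(c,d)\,b_m/m$ by estimating each factor separately. The Chlodowsky hypotheses $b_m\to\infty$ with $b_m/m\to 0$ then make the residual vanish, so all three Korovkin conditions hold. Applying Korovkin's theorem \cite{AC} to the positive linear operators $\widetilde{B}_m^{Cl}$ on $C(J)$ promotes this test-function convergence to uniform convergence $\widetilde{B}_m^{Cl}(g,J)\to g$ for every $g\in C(J)$, which is the statement of Theorem \ref{theorem1}.
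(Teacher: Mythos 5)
Your overall strategy coincides with the paper's: the paper's entire proof of Theorem \ref{theorem1} is the one-line remark that it ``follows from Korovkin's theorem'' via Lemma \ref{lemma1}, so isolating positivity, linearity, and the three test functions is the right move, and parts (i) and (ii) of the lemma indeed dispose of $e_0$ and $e_1$ exactly, leaving only the quadratic residual.

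The gap is in the one step you correctly single out as the only obstacle. Your AM--GM bound
\[
0\le \widetilde B_m^{Cl}(x^2,J)-x^2\le \frac{(\beta-\alpha)^2 b_m^2}{4m}
\]
is right, but the conclusion that the Chlodowsky hypotheses $b_m\to\infty$, $b_m/m\to 0$ make it vanish is not: $b_m^2/m=b_m\cdot(b_m/m)$ is an $\infty\cdot 0$ form, and for the paper's own running choice $b_m=\sqrt m$ it equals $1$ for every $m$, so your upper bound is the nonzero constant $(\beta-\alpha)^2/4$. Worse, the supremum of the residual over $J$ is attained at the midpoint $x=(\alpha+\beta)b_m/2$ and equals exactly $(\beta-\alpha)^2b_m^2/(4m)$, so uniform convergence on all of $[\alpha b_m,\beta b_m]$ genuinely fails for $g(x)=x^2$ unless one additionally assumes $b_m^2/m\to 0$. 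Your fallback bound $C(c,d)\,b_m/m$ on a fixed compact subset $[c,d]$ also does not hold in general: if $\alpha<0<\beta$, then on $[c,d]$ both factors $x-\alpha b_m$ and $\beta b_m-x$ are of order $b_m$, so their product is of order $b_m^2$, not $b_m$; the linear-in-$b_m$ estimate requires an endpoint of $J$ to sit at the origin (the classical case $\alpha=0$, where $x(\beta b_m-x)\le d\,\beta b_m$ on $[c,d]$). To close the Korovkin argument you must therefore either restrict to fixed compact subsets with $\alpha=0$ (or $\beta=0$), or impose the stronger hypothesis $b_m^2/m\to 0$. In fairness, the paper never addresses this and asserts uniform convergence on the $m$-dependent interval without qualification, so the defect is inherited from the theorem statement itself; but as written your argument does not establish the third Korovkin condition.
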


\section{Bivariate Bernstein-Chlodowsky-Stancu operators}

D.D. Stancu introduced a method for obtaining polynomials of Bernstein type of two variables in his paper \cite{Stancu}. This method can be applied to define a bivariate operator from univariate Bernstein-Chlodowsky operators $B_{m}^{Cl}\left(  g,x\right)  $.

Assume that $\varphi_{1}=\varphi_{1}\left(  x\right)  $ and $\varphi
_{2}=\varphi_{2}\left(  x\right)  $ are continuous functions satisfying
$\varphi_{1}<\varphi_{2}$ on $\left[  0, b_{m}\right]  .$
Let $\Delta \subset \mathbb{R}^{2}$ be a domain which is bounded by the curves
$y=\varphi_{1}\left(  x\right)  ,~y=\varphi_{2}\left(  x\right)  $ and the
straight lines $x=0$, $x= b_{m}.$ By taking into account
\[
y=\left(  \varphi_{2}\left(  x\right)  -\varphi_{1}\left(  x\right)  \right)
\frac{t}{b_{m}}+\varphi_{1}\left(  x\right)  ,
\]
for every function $g\left(  x,y\right)  $ on the domain $\Delta$, consider%
\[
G\left(  x,t\right)  =g\left(  x,\left(  \varphi_{2}\left(  x\right)
-\varphi_{1}\left(  x\right)  \right)  \frac{t}{b_{m}}+\varphi_{1}\left(
x\right)  \right)  ,~0\leq t\leq b_{m}.
\]
We can define the $m$th Bernstein--Chlodowsky-Stancu operator as follows%
\begin{equation}
\mathcal{B}_{m}^{Cl}\left[  g\left(  x,y\right)  ,\Delta \right]  =%
{\displaystyle \sum \limits_{k=0}^{m}}
{\displaystyle \sum \limits_{j=0}^{m_{k}}}
G\left(  \frac{kb_{m}}{m},\frac{jb_{m}}{m_{k}}\right)  q_{m,k}\left(  \frac
{x}{b_{m}}\right)  q_{m_{k},j}\left(  \frac{t}{b_{m}}\right)  ,\label{3}%
\end{equation}
where the integer $m_{k}$ is nonnegative and corresponds to the $k$th node,
given by $x_{k}=\frac{kb_{m}}{m}.$ In the explicit form, the operator
$\mathcal{B}_{m}^{Cl}$ can be rewritten as%
\[
\mathcal{B}_{m}^{Cl}\left[  g\left(  x,y\right)  ,\Delta \right]  =%
{\displaystyle \sum \limits_{k=0}^{m}}
{\displaystyle \sum \limits_{j=0}^{m_{k}}}
G\left(  \frac{kb_{m}}{m},\frac{jb_{m}}{m_{k}}\right)  q_{m,k}\left(  \frac
{x}{b_{m}}\right)  q_{m_{k},j}\left(  \frac{y-\varphi_{1}\left(  x\right)
}{\varphi_{2}\left(  x\right)  -\varphi_{1}\left(  x\right)  }\right)  ,
\]
from which, it follows%
\[
\mathcal{B}_{m}^{Cl}\left[  g\left(  x,y\right)  ,\Delta \right]  =%
{\displaystyle \sum \limits_{k=0}^{m}}
\left[  B_{m_{k}}^{Cl(t)}G\left(  \frac{kb_{m}}{m},t\right)  \right]
q_{m,k}\left(  \frac{x}{b_{m}}\right)  ,
\]
where $B_{m}^{Cl(t)}$denotes the univariate Bernstein-Chlodowsky operator
acting on the variable $t$.

Notice that the partition step size along the $x$-axis is $\frac{b_{m}}{m}.$
For a fixed node $x_{k}=\frac{k}{m}b_{m}$, the partition step size along the
$t$-axis is $\frac{b_{m}}{m_{k}}$ . Thus, the partition step size along the
$y$-axis is $\frac{b_{m}}{n_{k}}$, where%
\[
n_{k}=\frac{m_{k}}{\varphi_{2}\left(  \frac{k}{m}b_{m}\right)  -\varphi
_{1}\left(  \frac{k}{m}b_{m}\right)  },
\]
from which, we have%
\[
G\left(  \frac{k}{m}b_{m},\frac{j}{m_{k}}b_{m}\right)  =g\left(  \frac{k}{m}b_{m}%
,\frac{j}{n_{k}}+\varphi_{1}\left(  \frac{k}{m}b_{m}\right)  \right)  .
\]

\section{Examples of Bernstein-Chlodowsky-Type Operators Under a Domain
Transformation}

In general, $\mathcal{B}_{m}^{Cl}\left[  g\left(  x,y\right)  ,\Delta \right]
$ is not a polynomial. However, by selecting $\varphi
_{1},~\varphi_{2}$ and $m_{k}$ appropriately, one can obtain polynomials in
the following examples. In this section, under a suitable domain transformation, we extend Bernstein-Chlodowsky-type operators on the square domain to another domain in $\mathbb{R}^{2}$.\\
Under the choices of $\varphi_{1}=0$ and $\varphi_{2}=b_{m},$ we consider the
square $S=[0,b_{m}]\times \lbrack0,b_{m}].$ For a function $g$ defined on $S $,
we have the Bernstein-Chlodowsky-Stancu operator on the square as%
\[
\mathcal{B}_{m}^{Cl}\left[  g\left(  x,y\right)  ,S\right]  =%
{\displaystyle \sum \limits_{k=0}^{m}}
{\displaystyle \sum \limits_{j=0}^{m_{k}}}
g\left(  \frac{kb_{m}}{m},\frac{jb_{m}}{m_{k}}\right)  q_{m,k}\left(  \frac
{x}{b_{m}}\right)  q_{m_{k},j}\left(  \frac{y}{b_{m}}\right)  .
\]

\subsection{The Bernstein-Chlodowsky-Stancu operators on the triangular domain}
The transformation%
\[
\frac{x}{b_{m}}=2u-1\text{ and }\frac{y}{b_{m}}=1-v(1-|2u-1|)
\]
maps the square domain $S=[0,b_{m}]\times \lbrack0,b_{m}]$ into the triangular
domain
\[
S_{b_{m}}=\{(x,y)\in \mathbb{R}^{2}\mid-b_{m}\leq x\leq b_{m},\,y\geq
x,\,y\geq-x,\,y\leq b_{m}\}.
\]
For every
function $g$ defined on $S_{b_{m}}$, we can define the function
$G:S\rightarrow%
\mathbb{R}
^{2}$ in the form
\[
G(u,v)=g((2u-1)b_{m},(1-v(1-|2u-1|))b_{m}),(u,v)\in S.
\]

Under this transformation, the Bernstein-Chlodowsky-Stancu operator on the
triangular domain $S_{b_{m}}$ is in the following form%

\begin{align*}
\widehat{\mathcal{B}}_{m}^{Cl}\left[  g\left(  x,y\right)  ,S_{b_{m}}\right]
& =\sum_{k=0}^{m}\sum_{j=0}^{m_{k}}g\left(  b_{m}(2u-1),b_{m}\left(
1-v(1-|2u-1|)\right)  \right)  q_{m,k}(2u-1)\\
& \times q_{m_{k},j}\left(  1-v(1-|2u-1|)\right) \\
& =\sum_{k=0}^{m}\sum_{j=0}^{m-k}g\left(  b_{m}(2u-1),b_{m}\left(  1-v\left(
1-|2u-1|\right)  \right)  \right) \\
& \times \binom{m}{k}(2u-1)^{k}\left(  1-(2u-1)\right)  ^{m-k}\binom{m-k}{j}\\
& \times \left(  1-v\left(  1-|2u-1|\right)  \right)  ^{j}\left(  v\left(
1-|2u-1|\right)  \right)  ^{m-k-j}%
\end{align*}
where $m_{k}=m-k.$

\begin{figure}[h!]
\centering
\includegraphics[height=2.2528in, width=6.0917in]{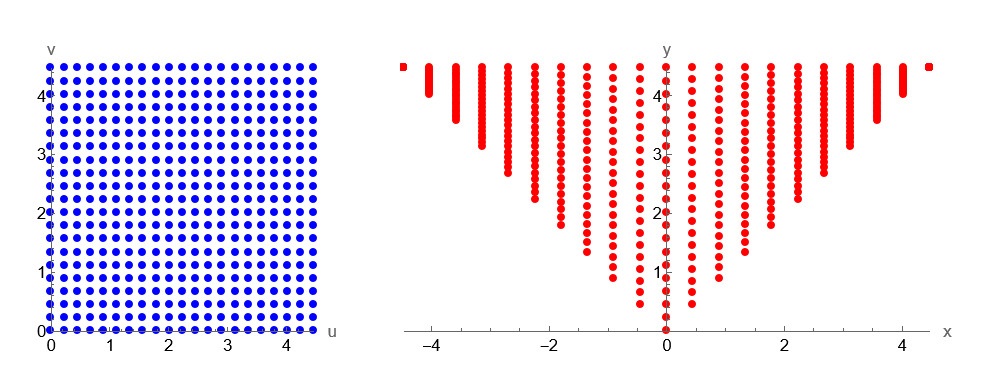}
\caption{}
\end{figure}

\subsection{The Bernstein-Chlodowsky-Stancu operators on the elliptic domain}

In this subsection, we consider two different transformations which map the
square domain into a elliptic domain.

Let%
\[
\mathbb{B}_{b_{m}}=\left \{  \left(  x,y\right)  \in%
\mathbb{R}
^{2}:\frac{x^{2}}{b_{m}^{2}}+\frac{y^{2}}{b_{m}^{2}}\leq1\right \}  .
\]
The transformation $x=(2u-b_{m}),y=(2v-b_{m})\sqrt{1-(2u-b_{m})^{2}}$ maps the
square $S=[0,b_{m}]\times \lbrack0,b_{m}]$ into $\mathbb{B}_{b_{m}}.$ For every
function $g$ defined on $\mathbb{B}_{b_{m}}$, we can define the function
$G:S\rightarrow%
\mathbb{R}
^{2}$ as
\[
G(u,v)=g(2u-b_{m},(2v-b_{m})\sqrt{1-(2u-b_{m})^{2}}),(u,v)\in S.
\]
Under this map, the operator transforms to the following operator%
\begin{align*}
\widehat{\mathcal{B}}_{m}^{Cl}\left[  g\left(  x,y\right)  ,\mathbb{B}_{b_{m}%
}\right]   & =%
{\displaystyle \sum \limits_{k=0}^{m}}
{\displaystyle \sum \limits_{j=0}^{m_{k}}}
g\left(  \frac{2k-m}{m}b_{m},\frac{2j-m_{k}}{m_{k}}\frac{\sqrt{m^{2}-{b_{m}}^2(2k-m)^{2}}}%
{m}b_{m}\right) \\
& \times q_{m,k}\left(  \frac{x+b_{m}}{2b_{m}}\right)  q_{m_{k},j}\left(
\frac{\frac{y}{\sqrt{1-x^{2}}}+b_{m}}{2b_{m}}\right)
\end{align*}
where%
\begin{align*}
q_{m,k}\left(  \frac{x+b_{m}}{2b_{m}}\right)  q_{m_{k},j}\left(  \frac{\frac
{y}{\sqrt{1-x^{2}}}+b_{m}}{2b_{m}}\right)   & =\binom{m}{k}\binom{m_{k}}{j}%
\frac{\left(  x+b_{m}\right)  ^{k}\left(  b_{m}-x\right)  ^{m-k}}{(2b_{m})^{m+m_{k}}\left(  \sqrt{1-x^{2}}\right)  ^{m_{k}}}\\
& \times \left(  y+b_{m}\sqrt{1-x^{2}}\right)  ^{j}\left( b_{m}\sqrt{1-x^{2}%
}-y\right)  ^{m_{k}-j}.
\end{align*}
Since for $y=0$ it follows
\begin{align*}
q_{m,k}\left(  \frac{x+b_{m}}{2b_{m}}\right)  q_{m_{k},j}\left(  \frac{1}
{2}\right)   & =\frac{1}{2^{m+m_{k}}b_{m}^{m}}\binom{m}{k}\binom{m_{k}}{j}\left(
x+b_{m}\right)  ^{k}\left(  b_{m}-x\right)  ^{m-k}%
\end{align*}
and for $x=0$
\begin{align*}
q_{m,k}\left(  \frac{1}{2}\right)  q_{m_{k},j}\left(  \frac{y+b_{m}}%
{2b_{m}}\right)   & =\frac{1}{2^{m+m_{k}}b_{m}^{m_{k}}}\binom{m}{k}\binom{m_{k}}{j} \left(  y+b_{m}\right)  ^{j}\left(  b_{m}-y\right)  ^{m_{k}-j},
\end{align*}
it is seen that $\widehat{\mathcal{B}}_{m}^{Cl}\left[  g\left(  x,y\right)  ,\mathbb{B}_{b_{m}%
}\right]$ is a polynomial for any $m_{k}$ on  the $x$-axis and $y$-axis.

We now consider another map to obtain another bivariate
Bernstein-Chlodowsky-Stancu operator on the elliptic domain $\mathbb{B}_{b_{m}}$.

If we apply the transformation
\[
x=\frac{u}{b_{m}}\sqrt{b_{m}^{2}-v^{2}},\quad y=v
\]
where $(u,v)\in S=[0,b_{m}]\times \lbrack0,b_{m}],$ the square $S$ maps into the
first quadrant $\mathbb{B}_{b_{m}}^{\left(  1\right)  }.$ Similarly, for every
function $g$ defined on $\mathbb{B}_{b_{m}}$, we can define the function
$G:S\rightarrow%
\mathbb{R}
^{2}~$\  \  which maps each quadrant to $S$. The corresponding bivariate
Bernstein-Chlodowsky-Stancu operators on the four quadrants of $\mathbb{B}_{b_{m}}$ are given as follows:%
\begin{align*}
\widehat{\mathcal{B}}_{m}^{Cl}\left[  g\left(  x,y\right)  ,\mathbb{B}_{b_{m}}%
^{\left(  1\right)  }\right]   & =%
{\displaystyle \sum \limits_{k=0}^{m}}
{\displaystyle \sum \limits_{j=0}^{m_{k}}}
g\left(  \frac{kb_{m}\sqrt{m_{k}^{2}-j^{2}}}{m_{k}m},\frac{j}{m_{k}%
}b_{m}\right)  q_{m,k}\left(  \frac{x}{\sqrt{b_{m}^{2}-y^{2}}}\right)
q_{m_{k},j}\left(  \frac{y}{b_{m}}\right) \\
\widehat{\mathcal{B}}_{m}^{Cl}\left[  g\left(  x,y\right)  ,\mathbb{B}_{b_{m}}%
^{\left(  2\right)  }\right]   & =%
{\displaystyle \sum \limits_{k=0}^{m}}
{\displaystyle \sum \limits_{j=0}^{m_{k}}}
g \left( - \frac{kb_{m}\sqrt{m_{k}^{2}-j^{2}}}{m_{k}m},\frac{j}{m_{k}%
}b_{m}\right)  q_{m,k}\left(  \frac{x}{\sqrt{b_{m}^{2}-y^{2}}}\right)
q_{m_{k},j}\left(  \frac{y}{b_{m}}\right) \\
\widehat{\mathcal{B}}_{m}^{Cl}\left[  g\left(  x,y\right)  ,\mathbb{B}_{b_{m}}%
^{\left(  3\right)  }\right]   & =%
{\displaystyle \sum \limits_{k=0}^{m}}
{\displaystyle \sum \limits_{j=0}^{m_{k}}}
g\left( - \frac{kb_{m}\sqrt{m_{k}^{2}-j^{2}}}{m_{k}m},-\frac{j}{m_{k}%
}b_{m}\right) q_{m,k}\left(  \frac{x}{\sqrt{b_{m}^{2}-y^{2}}}\right)q_{m_{k},j}\left(  \frac{y}{b_{m}}\right) \\
\widehat{\mathcal{B}}_{m}^{Cl}\left[  g\left(  x,y\right)  ,\mathbb{B}_{b_{m}}%
^{\left(  4\right)  }\right]   & =%
{\displaystyle \sum \limits_{k=0}^{m}}
{\displaystyle \sum \limits_{j=0}^{m_{k}}}
g\left(  \frac{kb_{m}\sqrt{m_{k}^{2}-j^{2}}}{m_{k}m},-\frac{j}{m_{k}%
}b_{m}\right) q_{m,k}\left(  \frac{x}{\sqrt{b_{m}^{2}-y^{2}}}\right) q_{m_{k},j}\left(  \frac{y}{b_{m}}\right)
\end{align*}
where the functions $G_{i}(u,v)$ ($i=1,2,3,4)$ on $S$ are as follows for every function $g$ on $\mathbb{B}_{b_{m}}$
\begin{align*}
G_{1}(u,v)  & =g(\frac{u}{b_{m}}\sqrt{b_{m}^{2}-v^{2}},v)\\
G_{2}(u,v)  & =g(-\frac{u}{b_{m}}\sqrt{b_{m}^{2}-v^{2}},v)\\
G_{3}(u,v)  & =g(-\frac{u}{b_{m}}\sqrt{b_{m}^{2}-v^{2}},-v)\\
G_{4}(u,v)  & =g(\frac{u}{b_{m}}\sqrt{b_{m}^{2}-v^{2}},-v).
\end{align*}
If we choose $m_{k}=m-k,$ we have%
\[
\widehat{\mathcal{\bar{B}}}_{m}^{Cl}\left[  g\left(  x,y\right)
,\mathbb{B}_{b_{m}}\right]  =\left \{
\begin{array}
[c]{c}%
\widehat{\mathcal{B}}_{m}^{Cl}\left[  g\left(  x,y\right)  ,\mathbb{B}_{b_{m}}%
^{\left(  1\right)  }\right]  ,\left(  x,y\right)  \in \mathbb{B}_{b_{m}}^{\left(
1\right)  }\\
\widehat{\mathcal{B}}_{m}^{Cl}\left[  g\left(  x,y\right)  ,\mathbb{B}_{b_{m}}%
^{\left(  2\right)  }\right]  ,\left(  x,y\right)  \in \mathbb{B}_{b_{m}}^{\left(
2\right)  }\\
\widehat{\mathcal{B}}_{m}^{Cl}\left[  g\left(  x,y\right)  ,\mathbb{B}_{b_{m}}%
^{\left(  3\right)  }\right]  ,\left(  x,y\right)  \in \mathbb{B}_{b_{m}}^{\left(
3\right)  }\\
\widehat{\mathcal{B}}_{m}^{Cl}\left[  g\left(  x,y\right)  ,\mathbb{B}_{b_{m}}%
^{\left(  4\right)  }\right]  ,\left(  x,y\right)  \in \mathbb{B}_{b_{m}}^{\left(
4\right)  }%
\end{array}
\right.
\]%

\begin{figure}
[ptbh]
\begin{center}
\includegraphics[
height=3.339in,
width=3.2102in
]%
{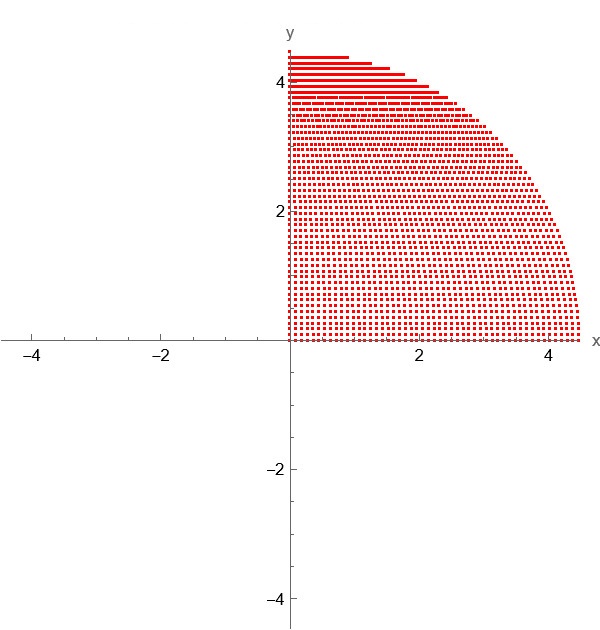}%
\caption{$b_{m}=\sqrt{m},~m=20$}%
\end{center}
\end{figure}

\section{Bivariate Shifted Bernstein-Chlodowsky-Stancu operators}

In this section, we introduce bivariate shifted $m$th
Bernstein-Chlodowsky-Stancu operators and discuss some their approximation properties.

Assume that $\varphi_{1}\left(  x\right)  $ and $\varphi_{2}\left(  x\right)
$ are continuous functions satisfying $\varphi_{1}<\varphi_{2}$ on the
interval $J=\left[  \alpha b_{m},\beta b_{m}\right]  .$ Let $\Delta
\subset \mathbb{R}^{2}$ be a domain which is bounded by the curves
$y=\varphi_{1}\left(  x\right)  ,~y=\varphi_{2}\left(  x\right)  $ and the
straight lines $x=\alpha b_{m}$, $x=\beta b_{m}.$ For a fixed $x\in J,$ the
polynomial $\widetilde{q}_{m,k}\left(  y;\left[  \varphi_{1}\left(  x\right)
,\varphi_{2}\left(  x\right)  \right]  \right)  ,~0\leq k\leq m,~m\geq0$
denotes a univariate shifted Bernstein-Chlodowsky basis on the interval
$\left[  \varphi_{1}\left(  x\right)  ,\varphi_{2}\left(  x\right)  \right]
.$

For every function $g\left(  x,y\right)  $ on $\Delta,$ we consider the
function%
\[
\widetilde{G}\left(  u,v;\Delta \right)  =g\left(  \left(  \beta b_{m}-\alpha
b_{m}\right)  u+\alpha b_{m},\left(  \widetilde{\varphi}_{2}\left(  u\right)
-\widetilde{\varphi}_{1}\left(  u\right)  \right)  v+\widetilde{\varphi}%
_{1}\left(  u\right)  \right)
\]
where $0\leq u,v\leq1$ and%
\[
\widetilde{\varphi}_{i}\left(  u\right)  =\varphi_{i}\left(  \left(  \beta
b_{m}-\alpha b_{m}\right)  u+\alpha b_{m}\right)  ;~i=1,2.
\]

The shifted $m$th Bernstein-Chlodowsky-Stancu operator is as follows%
\[
\widetilde{\mathcal{B}}_{m}^{Cl}\left[  g\left(  x,y\right)  ;\Delta \right]  =%
{\displaystyle \sum \limits_{k=0}^{m}}
{\displaystyle \sum \limits_{j=0}^{m_{k}}}
\widetilde{G}\left(  \frac{k}{m},\frac{j}{m_{k}};\Delta \right)  \widetilde
{q}_{m,k}\left(  x;J\right)  \widetilde{q}_{m_{k},j}\left(  y;\left[
\varphi_{1},\varphi_{2}\right]  \right)  ,
\]
where%
\[
\widetilde{q}_{m,k}\left(  x;J\right)  =p_{m,k}\left(  \frac{x-\alpha b_{m}%
}{\left(  \beta-\alpha \right)  b_{m}}\right)  ~\text{and }\widetilde{q}%
_{m,k}\left(  y;\left[  \varphi_{1},\varphi_{2}\right]  \right)
=p_{m,k}\left(  \frac{y-\varphi_{1}\left(  x\right)  }{\varphi_{2}\left(
x\right)  -\varphi_{1}\left(  x\right)  }\right)
\]
and $m_{k}=m-k$ or $m_{k}=k$ for $0\leq k\leq m$.
In the explicit form, the univariate Bernstein-Chlodowsky is defined as%
\begin{align*}
\widetilde{\mathcal{B}}_{m}^{Cl}\left[  g\left(  x,y\right)  ;\Delta \right]   & =%
{\displaystyle \sum \limits_{k=0}^{m}}
{\displaystyle \sum \limits_{j=0}^{m_{k}}}
\widetilde{G}\left(  \frac{k}{m},\frac{j}{m_{k}};\Delta \right)  p_{m,k}\left(
\frac{x-\alpha b_{m}}{\left(  \beta-\alpha \right)  b_{m}}\right) \\
& \times p_{m_{k},j}\left(  \frac{y-\varphi_{1}\left(  x\right)  }{\varphi
_{2}\left(  x\right)  -\varphi_{1}\left(  x\right)  }\right)  .
\end{align*}
For the operator $\widetilde{\mathcal{B}}_{m}^{Cl}\left[  g\left(  x,y\right)
;\Delta \right]  ,$ the following results hold true.

\begin{lemma}\label{lemma2}
Assume that $\varphi _{1}\left( x\right) $ and $\varphi _{2}\left( x\right) $
are continuous functions satisfying $\varphi _{1}<\varphi _{2}$ on the
interval $J=\left[ \alpha b_{m},\beta b_{m}\right] .$ Let $\Delta \subset
\mathbb{R}^{2}$ be a domain which is bounded by the curves $y=\varphi
_{1}\left( x\right) ,~y=\varphi _{2}\left( x\right) $ and the straight lines
$x=\alpha b_{m}$, $x=\beta b_{m}.$ Then, the following results are satisfied\\
\\
(i) $\widetilde{\mathcal{B}}_{m}^{Cl}\left[ 1;\Delta \right] =1$\\
(ii) $\widetilde{\mathcal{B}}_{m}^{Cl}\left[ x;\Delta \right] =x$\\
(iii) $\widetilde{\mathcal{B}}_{m}^{Cl}\left[ y;\Delta \right] \rightarrow y$ uniformly on the interval $ J $ when $ m\rightarrow \infty$ \\
(iv) $\widetilde{\mathcal{B}}_{m}^{Cl}\left[ x^{2};\Delta \right] =x^{2}+\frac{1}{m}%
\left( x-\alpha b_{m}\right) \left( \beta b_{m}-x\right) $\\
(v) $\widetilde{\mathcal{B}}_{m}^{Cl}\left[ y^{2};\Delta \right] \rightarrow y^{2}$ uniformly on the interval $J$ when $m\rightarrow \infty $.
\end{lemma}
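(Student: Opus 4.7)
The plan is to exploit the tensor-product form of the operator. For fixed $x$, the polynomials $\widetilde{q}_{m_k,j}(y;[\varphi_1,\varphi_2])=p_{m_k,j}\!\left(\tfrac{y-\varphi_1(x)}{\varphi_2(x)-\varphi_1(x)}\right)$ are the classical Bernstein basis of degree $m_k$ on $[\varphi_1(x),\varphi_2(x)]$, so the standard moment identities
\[
\sum_{j=0}^{m_k}\widetilde{q}_{m_k,j}=1,\quad \sum_{j=0}^{m_k}\tfrac{j}{m_k}\widetilde{q}_{m_k,j}=t,\quad \sum_{j=0}^{m_k}\bigl(\tfrac{j}{m_k}\bigr)^2\widetilde{q}_{m_k,j}=t^2+\tfrac{t(1-t)}{m_k},
\]
with $t=\tfrac{y-\varphi_1(x)}{\varphi_2(x)-\varphi_1(x)}$, are at my disposal. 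The strategy is to carry out the inner $j$-sum first in closed form and then reduce the outer $k$-sum to Lemma~\ref{lemma1} or Theorem~\ref{theorem1}.

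For parts (i), (ii), (iv), the test function $g=1,x,x^2$ depends only on $x$, so $\widetilde{G}(k/m,j/m_k;\Delta)$ is independent of $j$. The inner sum collapses by partition of unity, and what remains is exactly $\widetilde{B}_m^{Cl}(g,J)$; Lemma~\ref{lemma1} yields the three identities verbatim.

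For part (iii), writing $A_k:=\widetilde{\varphi}_2(k/m)-\widetilde{\varphi}_1(k/m)$ and substituting $\widetilde{G}(k/m,j/m_k;\Delta)=\widetilde{\varphi}_1(k/m)+A_k(j/m_k)$, the first-moment identity turns the inner sum into $\widetilde{\varphi}_1(k/m)+A_kt$. Summing over $k$ gives
\[
\widetilde{\mathcal{B}}_m^{Cl}[y;\Delta]=\widetilde{B}_m^{Cl}(\varphi_1,J)+t\bigl(\widetilde{B}_m^{Cl}(\varphi_2,J)-\widetilde{B}_m^{Cl}(\varphi_1,J)\bigr),
\]
which by Theorem~\ref{theorem1} (applied to the continuous functions $\varphi_1,\varphi_2$) converges uniformly in $x\in J$ and hence in $(x,y)\in\Delta$ (since $0\le t\le 1$) to $\varphi_1(x)+t(\varphi_2(x)-\varphi_1(x))=y$.

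For part (v), expanding the square of $\widetilde{\varphi}_1(k/m)+A_k(j/m_k)$ and using all three inner identities produces an inner sum of $[\widetilde{\varphi}_1(k/m)+A_kt]^2+A_k^2\,t(1-t)/m_k$. The main term, once summed against $\widetilde{q}_{m,k}(x;J)$, is $\widetilde{B}_m^{Cl}\!\bigl((\varphi_1+t(\varphi_2-\varphi_1))^2,J\bigr)$ for $t$ held fixed, and Theorem~\ref{theorem1} gives the uniform limit $(\varphi_1(x)+t(\varphi_2(x)-\varphi_1(x)))^2=y^2$. The principal obstacle is the remainder $R_m=t(1-t)\sum_{k}A_k^2 m_k^{-1}\widetilde{q}_{m,k}(x;J)$: because $1/m_k$ blows up when $m_k$ is small (at $k$ near $m$ for the choice $m_k=m-k$), the bound $t(1-t)\le 1/4$ and boundedness of $A_k$ on compact sets are not enough. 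I expect to handle this by splitting the $k$-sum at $k=\lfloor m(1-\varepsilon)\rfloor$: on the lower range $m_k^{-1}\le(\varepsilon m)^{-1}\to 0$, while on the upper range the mass $\sum_{k>m(1-\varepsilon)}\widetilde{q}_{m,k}(x;J)$ is controlled, for $x$ bounded away from $\beta b_m$, by Chebyshev's inequality applied to the second central moment $\widetilde{B}_m^{Cl}((\cdot-x)^2,J)=m^{-1}(x-\alpha b_m)(\beta b_m-x)$ available from Lemma~\ref{lemma1}.
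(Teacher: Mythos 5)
Your proposal follows the paper's route almost exactly: for (i), (ii), (iv) you collapse the inner $j$-sum by the partition of unity and invoke Lemma \ref{lemma1}; for (iii) you use the first-moment identity to get $\widetilde{\mathcal{B}}_m^{Cl}[y;\Delta]=\widetilde{B}_m^{Cl}(\varphi_1,J)+t\bigl(\widetilde{B}_m^{Cl}(\varphi_2,J)-\widetilde{B}_m^{Cl}(\varphi_1,J)\bigr)$, which is the paper's $t\,\widetilde{B}_m^{Cl}[\varphi_2-\varphi_1;J]+\widetilde{B}_m^{Cl}[\varphi_1;J]$ written with linearity applied once more; for (v) you expand the square and separate the same main term and the same remainder $R_m=t(1-t)\sum_k A_k^2 m_k^{-1}\widetilde{q}_{m,k}(x;J)$ that the paper calls $Q_m$. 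Parts (i)--(iv) and the main term of (v) are correct and complete modulo routine bookkeeping.

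The one substantive point is the remainder in (v), and here you have correctly put your finger on the step that is actually delicate --- more honestly than the paper, which simply rewrites $Q_m$ as $\frac1m$ times a shifted Bernstein--Chlodowsky operator applied to the unbounded function $(\varphi_2^2-\varphi_1^2)\big/\bigl(1-\tfrac{x-\alpha b_m}{(\beta-\alpha)b_m}\bigr)$ (under a strange reparametrization $m_k=m-kb_m$) and asserts without argument that it tends to zero. However, your proposed repair does not close the gap either, and you half-admit this: the splitting at $k=\lfloor m(1-\varepsilon)\rfloor$ controls the lower range, but on the upper range the Bernstein mass is \emph{not} negligible when $x$ is close to $\beta b_m$. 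Concretely, writing $s=\tfrac{x-\alpha b_m}{(\beta-\alpha)b_m}$ and taking $s=1-c/m$, the single term $k=m-1$ contributes $\tfrac{1}{m_k}\binom{m}{m-1}s^{m-1}(1-s)=c(1-c/m)^{m-1}\to c\,e^{-c}$, so $\sum_k m_k^{-1}\widetilde{q}_{m,k}(x;J)$ does not tend to $0$ uniformly on $J$, and $R_m$ does not vanish there unless $\varphi_2-\varphi_1$ degenerates at $x=\beta b_m$ or $t(1-t)$ is forced to $0$. So the uniform claim in (v) (as on all of $\Delta$) is only obtained, by your argument and by the paper's, on subsets with $x$ bounded away from the lateral edge where $m_k$ becomes small (the edge $x=\beta b_m$ for $m_k=m-k$, the edge $x=\alpha b_m$ for $m_k=k$); the Chebyshev bound from Lemma \ref{lemma1}(iv) gives tail mass $O\!\bigl(m^{-1}\varepsilon^{-2}\bigr)$ only after dividing by $(\varepsilon(\beta-\alpha)b_m)^2$, which is fine, but it cannot rescue points within $O(b_m/m)$ of the edge. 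You should either restrict the uniformity in (v) accordingly or add a hypothesis forcing $A_k\to 0$ at the relevant endpoint; as written, both your argument and the paper's leave the same hole, yours at least visibly.
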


\begin{proof}
(i) It follows from (\ref{6}) and (\ref{5})
\begin{align*}
\widetilde{\mathcal{B}}_{m}^{Cl}\left[  1;\Delta \right]   & =%
{\displaystyle \sum \limits_{k=0}^{m}}
{\displaystyle \sum \limits_{j=0}^{m_{k}}}
p_{m,k}\left(  \frac{x-\alpha b_{m}}{\left(  \beta-\alpha \right)  b_{m}%
}\right)  p_{m_{k},j}\left(  \frac{y-\varphi_{1}\left(  x\right)  }%
{\varphi_{2}\left(  x\right)  -\varphi_{1}\left(  x\right)  }\right) \\
& =%
{\displaystyle \sum \limits_{k=0}^{m}}
{\displaystyle \sum \limits_{j=0}^{m_{k}}}
\widetilde{q}_{m,k}\left(  x;J\right)  \widetilde{q}_{m_{k},j}\left(
y;\left[  \varphi_{1},\varphi_{2}\right]  \right)  =%
{\displaystyle \sum \limits_{j=0}^{m_{k}}}
\widetilde{q}_{m_{k},j}\left(  y;\left[  \varphi_{1},\varphi_{2}\right]
\right) \\
& =%
{\displaystyle \sum \limits_{j=0}^{m_{k}}}
\binom{m_{k}}{j}\left(  \frac{y-\varphi_{1}\left(  x\right)  }{\varphi
_{2}\left(  x\right)  -\varphi_{1}\left(  x\right)  }\right)  ^{j}\left(
1-\frac{y-\varphi_{1}\left(  x\right)  }{\varphi_{2}\left(  x\right)
-\varphi_{1}\left(  x\right)  }\right)  ^{m_{k}-j}\\
& =%
{\displaystyle \sum \limits_{j=0}^{m_{k}}}
\binom{m_{k}}{j}\left(  y-\varphi_{1}\left(  x\right)  \right)  ^{j}\left(
\varphi_{2}\left(  x\right)  -y\right)  ^{m_{k}-j}\frac{1}{\left(  \varphi
_{2}\left(  x\right)  -\varphi_{1}\left(  x\right)  \right)  ^{m_{k}}}\\
& =\left(  \varphi_{2}\left(  x\right)  -\varphi_{1}\left(  x\right)  \right)
^{m_{k}}\frac{1}{\left(  \varphi_{2}\left(  x\right)  -\varphi_{1}\left(
x\right)  \right)  ^{m_{k}}}=1.
\end{align*}

(ii) Since $%
{\displaystyle \sum \limits_{k=0}^{m}}
\widetilde{q}_{m,k}\left(  x;J\right)  =1,$ we have%
\begin{align*}
\widetilde{\mathcal{B}}_{m}^{Cl}\left[  x;\Delta \right]   & =%
{\displaystyle \sum \limits_{k=0}^{m}}
{\displaystyle \sum \limits_{j=0}^{m_{k}}}
\left(  \left(  \beta-\alpha \right)  b_{m}\frac{k}{m}+\alpha b_{m}\right)
\widetilde{q}_{m,k}\left(  x;J\right)  \widetilde{q}_{m_{k},j}\left(
y;\left[  \varphi_{1},\varphi_{2}\right]  \right) \\
& =%
{\displaystyle \sum \limits_{k=0}^{m}}
\left(  \left(  \beta-\alpha \right)  b_{m}\frac{k}{m}+\alpha b_{m}\right)
\widetilde{q}_{m,k}\left(  x;J\right)
{\displaystyle \sum \limits_{j=0}^{m_{k}}}
\widetilde{q}_{m_{k},j}\left(  y;\left[  \varphi_{1},\varphi_{2}\right]
\right) \\
& =\left(  \beta-\alpha \right)  b_{m}%
{\displaystyle \sum \limits_{k=0}^{m}}
\frac{k}{m}\widetilde{q}_{m,k}\left(  x;J\right)  +\alpha b_{m}%
{\displaystyle \sum \limits_{k=0}^{m}}
\widetilde{q}_{m,k}\left(  x;J\right) \\
& =\left(  \beta-\alpha \right)  b_{m}%
{\displaystyle \sum \limits_{k=0}^{m}}
\frac{k}{m}\widetilde{q}_{m,k}\left(  x;J\right)  +\alpha b_{m}\\
& =\left(  \beta-\alpha \right)  b_{m}%
{\displaystyle \sum \limits_{k=0}^{m}}
\frac{k}{m}\frac{1}{\left(  \beta-\alpha \right)  ^{m}b_{m}^{m}}\binom{m}%
{k}\left(  x-\alpha b_{m}\right)  ^{k}\left(  \beta b_{m}-x\right)
^{m-k}+\alpha b_{m}\\
& =\left(  x-\alpha b_{m}\right)
{\displaystyle \sum \limits_{k=0}^{m-1}}
\widetilde{q}_{m-1,k}\left(  x;J\right)  +\alpha b_{m}\\
& =x-\alpha b_{m}+\alpha b_{m}=x
\end{align*}

(iii) We obtain%
\begin{align}
\widetilde{\mathcal{B}}_{m}^{Cl}\left[  y;\Delta \right]   & =%
{\displaystyle \sum \limits_{k=0}^{m}}
{\displaystyle \sum \limits_{j=0}^{m_{k}}}
\left(  \left(  \widetilde{\varphi}_{2}\left(  \frac{k}{m}\right)
-\widetilde{\varphi}_{1}\left(  \frac{k}{m}\right)  \right)  \frac{j}{m_{k}%
}+\widetilde{\varphi}_{1}\left(  \frac{k}{m}\right)  \right) \nonumber \\
& \times \widetilde{q}_{m,k}\left(  x;J\right)  \widetilde{q}_{m_{k},j}\left(
y;\left[  \varphi_{1},\varphi_{2}\right]  \right) \nonumber \\
& =%
{\displaystyle \sum \limits_{k=0}^{m}}
\left(  \widetilde{\varphi}_{2}\left(  \frac{k}{m}\right)  -\widetilde
{\varphi}_{1}\left(  \frac{k}{m}\right)  \right)  \widetilde{q}_{m,k}\left(
x;J\right)
{\displaystyle \sum \limits_{j=0}^{m_{k}}}
\widetilde{q}_{m_{k},j}\left(  y;\left[  \varphi_{1},\varphi_{2}\right]
\right)  \frac{j}{m_{k}}\nonumber \\
& +%
{\displaystyle \sum \limits_{k=0}^{m}}
\widetilde{\varphi}_{1}\left(  \frac{k}{m}\right)  \widetilde{q}_{m,k}\left(
x;J\right)  .\label{7}%
\end{align}
Since $%
{\displaystyle \sum \limits_{j=0}^{m_{k}}}
\widetilde{q}_{m_{k},j}\left(  y;\left[  \varphi_{1},\varphi_{2}\right]
\right)  \frac{j}{m_{k}}=\left(  \frac{y-\varphi_{1}\left(  x\right)
}{\varphi_{2}\left(  x\right)  -\varphi_{1}\left(  x\right)  }\right)
{\displaystyle \sum \limits_{j=0}^{m_{k}-1}}
\widetilde{q}_{m_{k}-1,j}\left(  y;\left[  \varphi_{1},\varphi_{2}\right]
\right)  =\left(  \frac{y-\varphi_{1}\left(  x\right)  }{\varphi_{2}\left(
x\right)  -\varphi_{1}\left(  x\right)  }\right)  ,$ the equation (\ref{7})
reduces to%
\begin{align*}
\widetilde{\mathcal{B}}_{m}^{Cl}\left[  y;\Delta \right]   & =\left(  \frac{y-\varphi
_{1}\left(  x\right)  }{\varphi_{2}\left(  x\right)  -\varphi_{1}\left(
x\right)  }\right)
{\displaystyle \sum \limits_{k=0}^{m}}
\left(  \widetilde{\varphi}_{2}\left(  \frac{k}{m}\right)  -\widetilde
{\varphi}_{1}\left(  \frac{k}{m}\right)  \right)  \widetilde{q}_{m,k}\left(
x;J\right) \\
& +%
{\displaystyle \sum \limits_{k=0}^{m}}
\widetilde{\varphi}_{1}\left(  \frac{k}{m}\right)  \widetilde{q}_{m,k}\left(
x;J\right) \\
& =\left(  \frac{y-\varphi_{1}\left(  x\right)  }{\varphi_{2}\left(  x\right)
-\varphi_{1}\left(  x\right)  }\right)  \widetilde{B}_{m}^{Cl}\left[
\varphi_{2}-\varphi_{1};J\right]  +\widetilde{B}_{m}^{Cl}\left[  \varphi
_{1};J\right]
\end{align*}
where $\widetilde{B}_{m}^{Cl}$ is the univariate shifted Bernstein-Chlodowsky
operators. From theorem \ref{theorem1}, since%
\[
\widetilde{B}_{m}^{Cl}\left[  \varphi_{2}-\varphi_{1};J\right]  \rightarrow
\varphi_{2}-\varphi_{1}\text{ and }\widetilde{B}_{m}^{Cl}\left[  \varphi
_{1};J\right]  \rightarrow \varphi_{1}\text{,}%
\]
we arrive at%
\[
\underset{m\rightarrow \infty}{\lim}\widetilde{\mathcal{B}}_{m}^{Cl}\left[
y;\Delta \right]  =\left(  \frac{y-\varphi_{1}\left(  x\right)  }{\varphi
_{2}\left(  x\right)  -\varphi_{1}\left(  x\right)  }\right)  \left(
\varphi_{2}\left(  x\right)  -\varphi_{1}\left(  x\right)  \right)
+\varphi_{1}\left(  x\right)  =y.
\]

(iv)
\begin{align*}
\widetilde{\mathcal{B}}_{m}^{Cl}\left[  x^{2};\Delta \right]   & =%
{\displaystyle \sum \limits_{k=0}^{m}}
{\displaystyle \sum \limits_{j=0}^{m_{k}}}
\left(  \left(  \beta-\alpha \right)  b_{m}\frac{k}{m}+\alpha b_{m}\right)
^{2}\widetilde{q}_{m,k}\left(  x;J\right)  \widetilde{q}_{m_{k},j}\left(
y;\left[  \varphi_{1},\varphi_{2}\right]  \right) \\
& =\left(  \beta-\alpha \right)  ^{2}b_{m}^{2}%
{\displaystyle \sum \limits_{k=0}^{m}}
\left(  \frac{k}{m}\right)  ^{2}\widetilde{q}_{m,k}\left(  x;J\right)
+2\left(  \beta-\alpha \right)  \alpha b_{m}^{2}%
{\displaystyle \sum \limits_{k=0}^{m}}
\frac{k}{m}\widetilde{q}_{m,k}\left(  x;J\right)  +\alpha^{2}b_{m}^{2}\\
& =\left(  \frac{m-1}{m}\right)  \left(  x-\alpha b_{m}\right)  ^{2}%
{\displaystyle \sum \limits_{k=0}^{m-2}}
\widetilde{q}_{m-2,k}\left(  x;J\right)  +\frac{\left(  \beta-\alpha \right)
b_{m}\left(  x-\alpha b_{m}\right)  }{m}%
{\displaystyle \sum \limits_{k=0}^{m-1}}
\widetilde{q}_{m-1,k}\left(  x;J\right) \\
& +2\alpha b_{m}\left(  x-\alpha b_{m}\right)
{\displaystyle \sum \limits_{k=0}^{m-1}}
\widetilde{q}_{m-1,k}\left(  x;J\right)  +\alpha^{2}b_{m}^{2}\\
& =\left(  \frac{m-1}{m}\right)  \left(  x-\alpha b_{m}\right)  ^{2}%
+\frac{\left(  \beta-\alpha \right)  b_{m}\left(  x-\alpha b_{m}\right)  }%
{m}+2\alpha b_{m}\left(  x-\alpha b_{m}\right)  +\alpha^{2}b_{m}^{2}\\
& =x^{2}+\frac{1}{m}\left(  x-\alpha b_{m}\right)  \left(  \beta
b_{m}-x\right)  .
\end{align*}
(v) Finally, it follows%
\begin{align*}
\widetilde{\mathcal{B}}_{m}^{Cl}\left[  y^{2};\Delta \right]   & =%
{\displaystyle \sum \limits_{k=0}^{m}}
{\displaystyle \sum \limits_{j=0}^{m_{k}}}
\left(  \left(  \widetilde{\varphi}_{2}\left(  \frac{k}{m}\right)
-\widetilde{\varphi}_{1}\left(  \frac{k}{m}\right)  \right)  \frac{j}{m_{k}%
}+\widetilde{\varphi}_{1}\left(  \frac{k}{m}\right)  \right)  ^{2}\\
& \times \widetilde{q}_{m,k}\left(  x;J\right)  \widetilde{q}_{m_{k},j}\left(
y;\left[  \varphi_{1},\varphi_{2}\right]  \right) \\
& =%
{\displaystyle \sum \limits_{k=0}^{m}}
\left(  \widetilde{\varphi}_{2}\left(  \frac{k}{m}\right)  -\widetilde
{\varphi}_{1}\left(  \frac{k}{m}\right)  \right)  ^{2}\widetilde{q}%
_{m,k}\left(  x;J\right)
{\displaystyle \sum \limits_{j=0}^{m_{k}}}
\widetilde{q}_{m_{k},j}\left(  y;\left[  \varphi_{1},\varphi_{2}\right]
\right)  \left(  \frac{j}{m_{k}}\right)  ^{2}\\
& +2%
{\displaystyle \sum \limits_{k=0}^{m}}
{\displaystyle \sum \limits_{j=0}^{m_{k}}}
\left(  \widetilde{\varphi}_{2}\left(  \frac{k}{m}\right)  -\widetilde
{\varphi}_{1}\left(  \frac{k}{m}\right)  \right)  \left(  \frac{j}{m_{k}%
}\right)  \widetilde{\varphi}_{1}\left(  \frac{k}{m}\right)  \widetilde
{q}_{m,k}\left(  x;J\right)  \widetilde{q}_{m_{k},j}\left(  y;\left[
\varphi_{1},\varphi_{2}\right]  \right) \\
& +%
{\displaystyle \sum \limits_{k=0}^{m}}
{\displaystyle \sum \limits_{j=0}^{m_{k}}}
\left(  \widetilde{\varphi}_{1}\left(  \frac{k}{m}\right)  \right)
^{2}\widetilde{q}_{m,k}\left(  x;J\right)  \widetilde{q}_{m_{k},j}\left(
y;\left[  \varphi_{1},\varphi_{2}\right]  \right) \\
& =\widetilde{B}_{m}^{Cl}\left[  \varphi_{2}^{2}-\varphi_{1}^{2};J\right]
\left(  \frac{y-\varphi_{1}\left(  x\right)  }{\varphi_{2}\left(  x\right)
-\varphi_{1}\left(  x\right)  }\right)  ^{2}\\
& +\frac{\left(  y-\varphi_{1}\left(  x\right)  \right)  \left(  \varphi
_{2}\left(  x\right)  -y\right)  }{\left(  \varphi_{2}\left(  x\right)
-\varphi_{1}\left(  x\right)  \right)  ^{2}}%
{\displaystyle \sum \limits_{k=0}^{m}}
\frac{1}{m_{k}}\left(  \widetilde{\varphi}_{2}\left(  \frac{k}{m}\right)
-\widetilde{\varphi}_{1}\left(  \frac{k}{m}\right)  \right)  ^{2}\widetilde
{q}_{m,k}\left(  x;J\right) \\
& +2\frac{y-\varphi_{1}\left(  x\right)  }{\varphi_{2}\left(  x\right)
-\varphi_{1}\left(  x\right)  }%
{\displaystyle \sum \limits_{k=0}^{m}}
\left(  \widetilde{\varphi}_{2}\left(  \frac{k}{m}\right)  -\widetilde
{\varphi}_{1}\left(  \frac{k}{m}\right)  \right)  \widetilde{\varphi}%
_{1}\left(  \frac{k}{m}\right)  \widetilde{q}_{m,k}\left(  x;J\right)
+\widetilde{B}_{m}^{Cl}\left[  \varphi_{1}^{2};J\right] \\
& =\widetilde{B}_{m}^{Cl}\left[  \varphi_{2}^{2}-\varphi_{1}^{2};J\right]
\left(  \frac{y-\varphi_{1}\left(  x\right)  }{\varphi_{2}\left(  x\right)
-\varphi_{1}\left(  x\right)  }\right)  ^{2}+2\frac{y-\varphi_{1}\left(
x\right)  }{\varphi_{2}\left(  x\right)  -\varphi_{1}\left(  x\right)
}\widetilde{B}_{m}^{Cl}\left[  \left(  \varphi_{2}-\varphi_{1}\right)
\varphi_{1};J\right] \\
& +\widetilde{B}_{m}^{Cl}\left[  \varphi_{1}^{2};J\right]  +Q_{m}\left(
x\right)
\end{align*}
where%
\[
Q_{m}\left(  x\right)  =\frac{\left(  y-\varphi_{1}\left(  x\right)  \right)
\left(  \varphi_{2}\left(  x\right)  -y\right)  }{\left(  \varphi_{2}\left(
x\right)  -\varphi_{1}\left(  x\right)  \right)  ^{2}}%
{\displaystyle \sum \limits_{k=0}^{m}}
\frac{1}{m_{k}}\left(  \widetilde{\varphi}_{2}\left(  \frac{k}{m}\right)
-\widetilde{\varphi}_{1}\left(  \frac{k}{m}\right)  \right)  ^{2}\widetilde
{q}_{m,k}\left(  x;J\right)  .
\]
When we choose $m_{k}=m-kb_{m},$ we have%
\begin{align*}
Q_{m}\left(  x\right)   & =\frac{\left(  y-\varphi_{1}\left(  x\right)
\right)  \left(  \varphi_{2}\left(  x\right)  -y\right)  }{m\left(
\varphi_{2}\left(  x\right)  -\varphi_{1}\left(  x\right)  \right)  ^{2}}%
{\displaystyle \sum \limits_{k=0}^{m}}
\frac{1}{1-kb_{m}/m}\left(  \widetilde{\varphi}_{2}\left(  \frac{k}{m}\right)
-\widetilde{\varphi}_{1}\left(  \frac{k}{m}\right)  \right)  ^{2}\widetilde
{q}_{m,k}\left(  x;J\right) \\
& =\frac{\left(  y-\varphi_{1}\left(  x\right)  \right)  \left(  \varphi
_{2}\left(  x\right)  -y\right)  }{m\left(  \varphi_{2}\left(  x\right)
-\varphi_{1}\left(  x\right)  \right)  ^{2}}\widetilde{B}_{m}^{Cl}\left[
\frac{\varphi_{2}^{2}-\varphi_{1}^{2}}{1-\frac{x-\alpha b_{m}}{\left(
\beta-\alpha \right)  b_{m}}};J\right]
\end{align*}
and in the case of $m_{k}=kb_{m},$ then%
\[
Q_{m}\left(  x\right)  =\frac{\left(  y-\varphi_{1}\left(  x\right)  \right)
\left(  \varphi_{2}\left(  x\right)  -y\right)  }{m\left(  \varphi_{2}\left(
x\right)  -\varphi_{1}\left(  x\right)  \right)  ^{2}}\widetilde{B}_{m}%
^{Cl}\left[  \frac{\varphi_{2}^{2}-\varphi_{1}^{2}}{\frac{x-\alpha b_{m}%
}{\left(  \beta-\alpha \right)  b_{m}}};J\right]  .
\]
Since%
\[
\widetilde{B}_{m}^{Cl}\left[  \varphi_{2}^{2}-\varphi_{1}^{2};J\right]
\rightarrow \varphi_{2}^{2}-\varphi_{1}^{2},~\widetilde{B}_{m}^{Cl}\left[
\left(  \varphi_{2}-\varphi_{1}\right)  \varphi_{1};J\right]  \rightarrow
\left(  \varphi_{2}-\varphi_{1}\right)  \varphi_{1},\text{ and }\widetilde
{B}_{m}^{Cl}\left[  \varphi_{1}^{2};J\right]  \rightarrow \varphi_{1}%
^{2}\text{,}%
\]
from Theorem \ref{theorem1} and $Q_{m}\left(  x\right)  \rightarrow0$ as $m\rightarrow
\infty,$ it follows $\underset{m\rightarrow \infty}{\lim}\widetilde{\mathcal{B}}_{m}^{Cl}\left[  y^{2};\Delta \right]  =y^{2}.$
\end{proof}

In order to give approximation properties of the operator $\widetilde{\mathcal{B}}_{m}^{Cl}\left[  g\left(  x,y\right)  ;\Delta \right]  ,$ we first give the
modulus continuity of the function $g.$

\begin{definition}
\cite{Schurer} For a continuous function $g$ on the domain $\Delta ,$ if $%
\delta _{1}$ and $\delta _{2}$ are positive real numbers, the modulus of
continuity of $g$ is given by%
\begin{equation*}
w\left( \delta _{1},\delta _{2}\right) =\sup \left \vert g\left(
x_{2},y_{2}\right) -g\left( x_{1},y_{1}\right) \right \vert
\end{equation*}%
where the points $\left( x_{1},y_{1}\right) $ and $\left( x_{2},y_{2}\right)
$ are inside $\Delta $ such that $\left \vert x_{2}-x_{1}\right \vert \leq
\delta _{1}$ and $\left \vert y_{2}-y_{1}\right \vert \leq \delta _{2}.$ It is
also well known that, for any $\delta _{1},\delta _{2}>0$ on $\Delta $
\begin{equation}
\left \vert g\left( x_{2},y_{2}\right) -g\left( x_{1},y_{1}\right)
\right \vert \leq w\left( \left \vert x_{2}-x_{1}\right \vert ,\left \vert
y_{2}-y_{1}\right \vert \right) \leq w\left( \delta _{1},\delta _{2}\right) .
\label{9}
\end{equation}%
Also, the inequality
\begin{equation}
w\left( a\delta _{1},b\delta _{2}\right) \leq \left( a+b+1\right) w\left(
\delta _{1},\delta _{2}\right)   \label{8}
\end{equation}%
is satisfied for $a,b>0$ \cite{Schurer,Stancu}.
\end{definition}

\begin{theorem}
For a continuous function $g$ on the domain $\Delta ,$ we have%
\begin{equation*}
\widetilde{\mathcal{B}}_{m}^{Cl}\left[ g\left( x,y\right) ;\Delta \right] \rightarrow
g\left( x,y\right)
\end{equation*}%
as $m\rightarrow \infty $ uniformly on $\Delta .$
\end{theorem}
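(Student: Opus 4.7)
The plan is to prove uniform convergence by the standard Korovkin-style argument based on the modulus of continuity, relying on the moment identities already established in Lemma \ref{lemma2}. Since the operator $\widetilde{\mathcal{B}}_m^{Cl}$ is positive and linear, and preserves constants by Lemma \ref{lemma2}(i), for any $(x,y)\in\Delta$ one has
$$
\bigl|\widetilde{\mathcal{B}}_m^{Cl}[g;\Delta](x,y) - g(x,y)\bigr| \leq \widetilde{\mathcal{B}}_m^{Cl}\bigl[|g(s,t) - g(x,y)|;\Delta\bigr](x,y),
$$
so it suffices to estimate the right-hand side.

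First, I would invoke the modulus of continuity via the inequality (\ref{8}) in the equivalent form
$$
|g(s,t) - g(x,y)| \leq \Bigl(\frac{|s-x|}{\delta_1} + \frac{|t-y|}{\delta_2} + 1\Bigr)\, w(\delta_1,\delta_2),
$$
and apply $\widetilde{\mathcal{B}}_m^{Cl}$ to both sides. The two resulting first absolute moments can be controlled via the Cauchy--Schwarz inequality for the positive linear functional,
$$
\widetilde{\mathcal{B}}_m^{Cl}[|s-x|;\Delta] \leq \sqrt{\widetilde{\mathcal{B}}_m^{Cl}[(s-x)^2;\Delta]}, \qquad \widetilde{\mathcal{B}}_m^{Cl}[|t-y|;\Delta] \leq \sqrt{\widetilde{\mathcal{B}}_m^{Cl}[(t-y)^2;\Delta]},
$$
reducing everything to the two second central moments.

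Next I would compute those central moments from Lemma \ref{lemma2}. Expanding $(s-x)^2$ and using parts (i), (ii), (iv) gives
$$
\widetilde{\mathcal{B}}_m^{Cl}[(s-x)^2;\Delta] = \frac{(x-\alpha b_m)(\beta b_m - x)}{m},
$$
which tends to zero uniformly on any compact subset of $\Delta$ under the Chlodowsky growth assumption on $b_m$. For the $y$-direction, parts (i), (iii), (v), combined with Theorem \ref{theorem1} applied separately to $\varphi_1$, $\varphi_2$, and $\varphi_2^2 - \varphi_1^2$ on $J$, yield $\widetilde{\mathcal{B}}_m^{Cl}[(t-y)^2;\Delta] \to 0$ uniformly.

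Finally, choosing $\delta_1 = \delta_1(m)$ and $\delta_2 = \delta_2(m)$ equal to the square roots of the suprema of these two central moments, both $\delta_i \to 0$, and by uniform continuity of $g$ we conclude $w(\delta_1,\delta_2) \to 0$. Reassembling the estimates gives the desired uniform convergence. I expect the main technical obstacle to be the uniform control of $\widetilde{\mathcal{B}}_m^{Cl}[(t-y)^2;\Delta]$: because the $y$-sample weights involve $\widetilde{\varphi}_1(k/m)$ and $\widetilde{\varphi}_2(k/m)$, the two coordinates are coupled, so one must carefully bundle the univariate convergences of Theorem \ref{theorem1} for $\varphi_1$, $\varphi_2$, $(\varphi_2-\varphi_1)\varphi_1$ and $\varphi_2^2-\varphi_1^2$, and verify that the residual term $Q_m(x)$ identified in the proof of Lemma \ref{lemma2}(v) is uniformly small in $x$ as $m\to\infty$.
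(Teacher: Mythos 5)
Your proposal follows essentially the same route as the paper: bound $\left\vert g-\widetilde{\mathcal{B}}_{m}^{Cl}\left[ g;\Delta\right]\right\vert$ through the modulus-of-continuity inequality (\ref{8}), control the two first absolute central moments by the square roots of the second central moments (the paper phrases this as Jensen's inequality for the concave function $\sqrt{x}$, you as Cauchy--Schwarz for a positive linear functional --- the same estimate), and then feed in the moment computations of Lemma \ref{lemma2}. The only divergence is the final normalization of $\delta_{1},\delta_{2}$ (the paper takes $\delta_{1}=\delta_{2}=1/\sqrt{m}$, while you take the square roots of the suprema of the central moments, which is the cleaner standard choice), so the two arguments coincide in substance.
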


\begin{proof}
From (\ref{9}) and (\ref{8}), we can write%
\begin{align}
& \left \vert g\left(  x,y\right)  -\widetilde{G}\left(  \frac{k}{m},\frac
{j}{m_{k}};\Delta \right)  \right \vert \nonumber \\
& \leq w\left(  \left \vert x-\left(  \beta-\alpha \right)  b_{m}\frac{k}%
{m}-\alpha b_{m}\right \vert ,\left \vert y-\left(  \widetilde{\varphi}%
_{2}\left(  \frac{k}{m}\right)  -\widetilde{\varphi}_{1}\left(  \frac{k}%
{m}\right)  \right)  \frac{j}{m_{k}}-\widetilde{\varphi}_{1}\left(  \frac
{k}{m}\right)  \right \vert \right) \nonumber \\
& \leq \left(  \theta_{1}+\theta_{2}+1\right)  w\left(  \delta_{1},\delta
_{2}\right)  ,\label{10}%
\end{align}
where%
\[
\theta_{1}=\frac{1}{\delta_{1}}\left \vert x-\left(  \beta-\alpha \right)
b_{m}\frac{k}{m}-\alpha b_{m}\right \vert \text{ and }\theta_{2}=\frac
{1}{\delta_{2}}\left \vert y-\left(  \widetilde{\varphi}_{2}\left(  \frac{k}%
{m}\right)  -\widetilde{\varphi}_{1}\left(  \frac{k}{m}\right)  \right)
\frac{j}{m_{k}}-\widetilde{\varphi}_{1}\left(  \frac{k}{m}\right)  \right \vert
.
\]

By using the fact that $\widetilde{\mathcal{B}}_{m}^{Cl}\left[  1;\Delta \right]  =1$ and
$\widetilde{q}_{m,k}\left(  x;J\right)  \widetilde{q}_{m_{k},j}\left(
y;\left[  \varphi_{1},\varphi_{2}\right]  \right)  \geq0,$ it follows from
(\ref{10})%
\begin{align*}
& \left \vert g\left(  x,y\right)  -\widetilde{\mathcal{B}}_{m}^{Cl}\left[  g\left(
x,y\right)  ;\Delta \right]  \right \vert \\
& \leq%
{\displaystyle \sum \limits_{k=0}^{m}}
{\displaystyle \sum \limits_{j=0}^{m_{k}}}
\widetilde{q}_{m,k}\left(  x;J\right)  \widetilde{q}_{m_{k},j}\left(
y;\left[  \varphi_{1},\varphi_{2}\right]  \right)  \left \vert g\left(
x,y\right)  -\widetilde{G}\left(  \frac{k}{m},\frac{j}{m_{k}};\Delta \right)
\right \vert \\
& \leq%
{\displaystyle \sum \limits_{k=0}^{m}}
{\displaystyle \sum \limits_{j=0}^{m_{k}}}
\widetilde{q}_{m,k}\left(  x;J\right)  \widetilde{q}_{m_{k},j}\left(
y;\left[  \varphi_{1},\varphi_{2}\right]  \right)  \left(  \theta_{1}%
+\theta_{2}+1\right)  w\left(  \delta_{1},\delta_{2}\right) \\
& =I^{(1)}\left(  x,y,k,j,m,m_{k},\delta_{1},\delta_{2}\right)  +I^{(2)}%
\left(  x,y,k,j,m,m_{k},\delta_{1},\delta_{2}\right)  +I^{(3)}\left(
x,y,k,j,m,m_{k},\delta_{1},\delta_{2}\right)  ,
\end{align*}
where%
\[
I^{(i)}\left(  x,y,k,j,m,m_{k},\delta_{1},\delta_{2}\right)  =%
{\displaystyle \sum \limits_{k=0}^{m}}
{\displaystyle \sum \limits_{j=0}^{m_{k}}}
\widetilde{q}_{m,k}\left(  x;J\right)  \widetilde{q}_{m_{k},j}\left(
y;\left[  \varphi_{1},\varphi_{2}\right]  \right)  \theta_{i}w\left(
\delta_{1},\delta_{2}\right)  ,
\]

for $i=0,1,2$ and $\theta_{0}=1$. We now compute each $I^{(i)}\left(
x,y,k,j,m,m_{k},\delta_{1},\delta_{2}\right)  $ for $i=0,1,2.$

For the first term $I^{(1)}\left(  x,y,k,j,m,m_{k},\delta_{1},\delta
_{2}\right)  ,$ since $g\left(  x\right)  =\sqrt{x}$ is a concav function, we
can write from Jensen's inequality \cite{Jensen}%
\begin{align*}
&
{\displaystyle \sum \limits_{k=0}^{m}}
{\displaystyle \sum \limits_{j=0}^{m_{k}}}
\widetilde{q}_{m,k}\left(  x;J\right)  \widetilde{q}_{m_{k},j}\left(
y;\left[  \varphi_{1},\varphi_{2}\right]  \right)  \left \vert x-\left(
\beta-\alpha \right)  b_{m}\frac{k}{m}-\alpha b_{m}\right \vert \\
& \leq \left[
{\displaystyle \sum \limits_{k=0}^{m}}
{\displaystyle \sum \limits_{j=0}^{m_{k}}}
\widetilde{q}_{m,k}\left(  x;J\right)  \widetilde{q}_{m_{k},j}\left(
y;\left[  \varphi_{1},\varphi_{2}\right]  \right)  \left(  x-\left(
\beta-\alpha \right)  b_{m}\frac{k}{m}-\alpha b_{m}\right)  ^{2}\right]
^{1/2}\\
& =x^{2}\widetilde{\mathcal{B}}_{m}^{Cl}\left[  1;\Delta \right]  -2x\widetilde{\mathcal{B}}_{m}^{Cl}\left[  x;\Delta \right]  +\widetilde{\mathcal{B}}_{m}^{Cl}\left[  x^{2}%
;\Delta \right]  ,
\end{align*}
which converges to zero when $m\rightarrow \infty$ uniformly from the results
in Lemma \ref{lemma2}. Similarly, the second term converges to zero when $m\rightarrow
\infty$ uniformly. Finally, if we get $\delta_{1}=\delta_{2}=\frac{1}{\sqrt
{m}},$ it follows \ $w\left(  \frac{1}{\sqrt{m}},\frac{1}{\sqrt{m}}\right)
\rightarrow0$ when $m\rightarrow \infty.$ Hence, the proof is completed.
\end{proof}

\begin{example}
Let
 \[
{\varphi}_{1}(x) = 0.5 \sin\left(\frac{2 \pi x}{\beta - \alpha}\right)
\]
\[
{\varphi}_2(x) = 0.5 + 0.5 \cos\left(\frac{2 \pi x}{\beta - \alpha}\right)
\]
\[
\widetilde{\varphi}_{1}(u) = {\varphi}_1((\beta - \alpha) ub_{m} + \alpha b_{n} )
\]
\[
\widetilde{\varphi}_{2}(u) = {\varphi}_2((\beta - \alpha) ub_{m}  + \alpha b_{n} )
\]
\[
G(u, v) = \exp\left(-\left[\left(x - 0.5(\alpha + \beta)\right)^2 + \left(y - 0.5\right)^2\right]\right)
\]
\[
x = (\beta - \alpha) u b_{m} + \alpha b_{m}, \quad y = (\widetilde{\varphi}_{2}(u) - \widetilde{\varphi}_{1}(u)) v + \widetilde{\varphi}_{1}(u)
\]

 Figure 3 presents the convergence of the operator $\widetilde{\mathcal{B}}_{m}^{Cl}\left[  G\left(  u,v\right)  ;\Delta \right]$ to the function $G$ for $b_{m}=\sqrt{m}$ and, $m = 50$ and $m=40$, respectively.
 \end{example}
\begin{figure}[ht]
    \centering
    \includegraphics[width=0.45\textwidth]{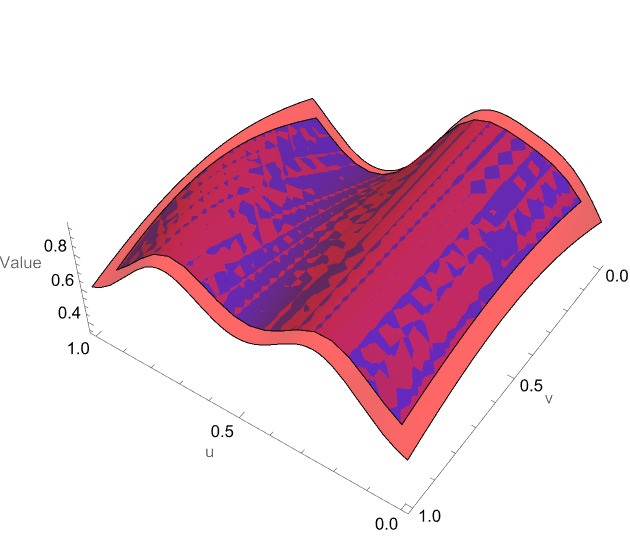}
    \hfill
    \includegraphics[width=0.45\textwidth]{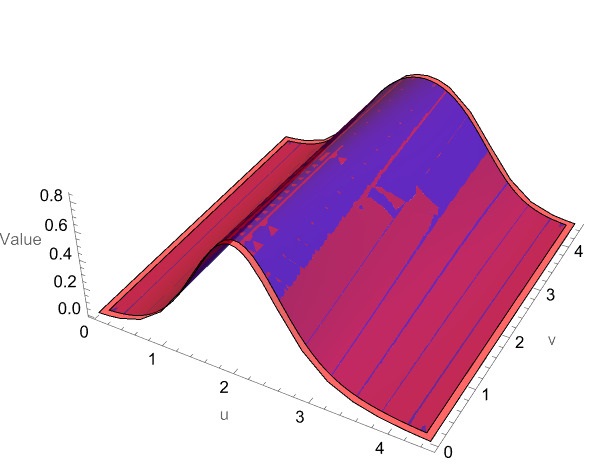}
    \caption{ Approximation of the operator $\widetilde{\mathcal{B}}_{m}^{Cl}\left[  G\left(  u,v\right)  ;\Delta \right]$ for $b_{m}=\sqrt{m}$ and, $m=50$ and $m=40$, respectively.}
    \label{fig:sidebyside}
\end{figure}

\textbf{Acknowledgements} Not applicable.\\

\textbf{Authors' contributions}
Both authors contributed equally to this work. Both authors have read and approved the final manuscript.\\

\textbf{Funding} Not applicable. \\

\textbf{Data availability}
Data sharing is not applicable to this article as no data sets were generated or analyzed during the current study.\\

\section*{Declarations}
\textbf{Conflict of interest} The authors declare no competing interests.\\

\textbf{Ethical Approval} Not applicable.

\end{document}